\newtheorem*{theorem*}{Theorem}
\newtheorem*{question*}{Question}
\theoremstyle{remark}
\newtheorem*{remark*}{Remark}
\title{The topological and smooth Hausmann--Weinberger invariants disagree}
\author{Mike Miller Eismeier}
\address{University of Vermont\\ USA}
\email{Mike.Miller-Eismeier@uvm.edu}
\begin{document}
\maketitle

\begin{abstract}\vspace{-1cm}
For $\pi$ a finitely presented group, Hausmann and Weinberger defined $q(\pi) \in \mathbb Z$ to be the minimum Euler characteristic over all closed, oriented $4$-manifolds with fundamental group $\pi$. This short note establishes that this minimum value in general differs depending on whether one minimizes over topological manifolds or only those admitting a smooth structure.\\ 
\end{abstract}

It is an important fact in 4-dimensional topology that any finitely presented group arises as the fundamental group of an explicit closed oriented $4$-manifold; this fact is often used to justify a focus on simply connected manifolds. Nevertheless, many interesting problems remain in the general case. To start, which finitely presented groups are the fundamental groups of manifolds with some additional geometric structure? What is the landscape of $4$-manifolds whose fundamental group is fixed?

Towards this second question, Hausmann and Weinberger \cite{HW} define the integer $q^{\mathsf{DIFF}}(\pi)$ as the minimal Euler characteristic over all closed \textit{smooth} oriented $4$-manifolds with fundamental group $\pi$. It is notoriously difficult to compute this quantity; its determination is posed as Problem 4.59 of the 1995 Kirby Problem List, which is open outside a select few cases. Kotschick computes $q$ for cyclic groups, surface groups, products of two surface groups, and computes the behavior under free product with $\mathbb Z$. Kirk and Livingston \cite{KL} compute $q$ for free abelian groups and certain direct products of cyclic groups. Hillman \cite{Hillman} computes $q$ for all groups of cohomological dimension $2$. Hildum \cite{Hildum} computes $q$ for a class of right-angled Artin groups and poses a conjecture for general RAAGs. Adem and Hambleton \cite{AH}, among other things, show that $q(A_4) = q(A_5) = 4$. Sun and Wang \cite{SW} compute $q$ for many $3$-manifold groups, extending other work of Hillman \cite{Hillman0} and Kirk--Livingston \cite{KL2}.

Because the smooth and topological categories are quite different in dimension $4$, it is also interesting to study the corresponding quantity $q^{\mathsf{TOP}}(\pi)$ where the minimum is instead taken over closed, oriented, \textit{topological} $4$-manifolds with fundamental group $\pi$. In all the examples enumerated above, it happens that $q^{\mathsf{TOP}}(\pi) = q^{\mathsf{DIFF}}(\pi)$. Kotschick asks in \cite[Problem 2.7]{Kotschick} whether this equality holds for all $\pi$. This question was repeated in \cite{KL, KL2}. Its answer is negative: 

\begin{theorem*}\label{thm:main}
There exists a finitely-presented group $\pi$ for which $q^{\text{\textup{\textsf{TOP}}}}(\pi)< q^{\text{\textup{\textsf{DIFF}}}}(\pi)$.
\end{theorem*}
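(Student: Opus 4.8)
The plan is to exhibit a group $\pi$ together with an explicit closed oriented \emph{topological} $4$-manifold $X$ with $\pi_1(X)=\pi$ and small Euler characteristic $c=\chi(X)$, and then to prove that every closed oriented \emph{smooth} $4$-manifold $M$ with $\pi_1(M)=\pi$ has $\chi(M)>c$; concretely I would aim for $q^{\mathsf{TOP}}(\pi)=c$ and $q^{\mathsf{DIFF}}(\pi)\ge c+2$. That such a gap can occur at all reflects the two faces of four-dimensional topology: Freedman's surgery and embedding theorems make the topological category flexible — for suitable (``good'') fundamental groups, every algebraically admissible surgery problem is solved by an honest topological manifold — while the smooth category is constrained by gauge-theoretic inequalities (Donaldson's diagonalization theorem, Furuta's $\tfrac{10}{8}$ theorem, the Seiberg--Witten basic-class estimates). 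The content is to find a single group for which both phenomena are visible at the level of the \emph{minimal} Euler characteristic.

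For the topological upper bound I would first isolate the algebra that bounds $q(\pi)$ from below. For any closed oriented $M$ with $\pi_1(M)=\pi$, the classifying map $c\colon M\to B\pi$ is $2$-connected, so $b_1(M)=b_1(\pi)$ and $H_2(M;\mathbb{Q})\twoheadrightarrow H_2(\pi;\mathbb{Q})$; dually $c^{*}$ is injective on $H^2(-;\mathbb{Q})$, and whenever $H^4(\pi;\mathbb{Q})\to H^4(M;\mathbb{Q})$ vanishes — automatic when $\operatorname{cd}_{\mathbb{Q}}\pi\le 3$, and also whenever $c_{*}[M]=0$ in $H_4(\pi;\mathbb{Q})$ — the image of $H^2(\pi;\mathbb{Q})$ in $H^2(M;\mathbb{Q})$ is isotropic for the intersection form, giving $\chi(M)\ge 2-2b_1(\pi)+2\,b_2(\pi;\mathbb{Q})$. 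I would choose $\pi$ so that this bound, or an integral sharpening of it coming from the $\mathbb{Z}[\pi]$-module structure of $H_2$, is the target value $c$, and then build $X$ attaining it. Freedman enters here: $X$ is glued from a thickened presentation model for $\pi$ together with a ``fake'' simply connected piece — an $E_8$- or $\ast\mathbb{CP}^2$-type block, or the solution of a surgery problem with nontrivial Kirby--Siebenmann and intersection data — which is realized topologically but not smoothly. One must check that $\pi$ is good enough for the relevant surgery/embedding theorems so that $X$ is a genuine topological manifold with $\pi_1(X)=\pi$ and $\chi(X)=c$.

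The smooth lower bound is the heart of the argument and the step I expect to be the main obstacle. One must exclude \emph{every} smooth closed oriented $M$ with $\pi_1(M)=\pi$ and $\chi(M)\le c$, not just those carrying one prescribed form — and smooth intersection forms are slippery, since connected sum with $\pm\mathbb{CP}^2$ alters the form while preserving $\pi_1$ and raising $\chi$ by only $1$. The idea is to convert the constraints above into rigidity for a hypothetical minimal smooth $M$. Two concrete avenues: (a) take $\pi$ finite, so that the universal cover $\widetilde M$ is a closed simply connected smooth $4$-manifold carrying a free $\pi$-action with $\chi(M)=\bigl(2+b_2(\widetilde M)\bigr)/|\pi|$, and invoke gauge-theoretic restrictions on such actions (equivariant $\tfrac{10}{8}$-type inequalities, or equivariant Seiberg--Witten/Bauer--Furuta theory) to force $b_2(\widetilde M)$, hence $\chi(M)$, above the topologically achievable value; or (b) arrange that minimality plus the isotropic and module bounds pin $b_2^{\pm}(M)$ to specific values and force $Q_M$ to be even (equivalently $M$ spin) with prescribed nonzero signature, so that Furuta's $\tfrac{10}{8}$ inequality is violated unless $b_2(M)$ is larger than it is on $X$. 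In either case the topological construction dodges the penalty because Freedman realizes the $\pi$-invariant form (resp.\ the Poincar\'e model) without the gauge-theoretic cost. Turning one of these avenues into an actual numerical gap is exactly the crux.

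Assembling the two halves gives $q^{\mathsf{TOP}}(\pi)\le\chi(X)=c<c+2\le q^{\mathsf{DIFF}}(\pi)$, which proves the theorem for this $\pi$. Beyond routine bookkeeping, the genuine difficulties are (i) confirming that $\pi$ lies in Freedman's class of good groups (or is otherwise amenable to topological surgery), so that the model $X$ exists, and (ii) the rigidity argument of the previous paragraph. If a clean gauge-theoretic obstruction proves elusive, a workable fallback is to take $\pi$ among the fundamental groups whose $q^{\mathsf{DIFF}}$ has already been pinned down via Seiberg--Witten theory, as in Kotschick's \cite{Kotschick} or Sun--Wang's \cite{SW} work, and then to produce, by a direct Freedman-type construction, a topological representative with strictly smaller Euler characteristic.
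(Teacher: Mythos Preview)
Your proposal is a strategic sketch rather than a proof: you enumerate possible avenues but do not pin down a specific $\pi$, and you leave both the topological construction and the smooth obstruction as open problems. The paper's argument is short and rests on one idea you did not hit upon: take $\pi$ to be the fundamental group of an already-existing closed oriented \emph{aspherical} topological $4$-manifold $N$ with $w_2(N)=0$ and $\sigma(N)=8$---the Davis--Januszkiewicz manifold of \cite{DJ}---which is non-smoothable by Rokhlin.

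Asphericity does all the work you were trying to arrange by hand. Since $N=K(\pi,1)$, for any closed oriented $M$ with $\pi_1(M)\cong\pi$ the classifying map $f\colon M\to N$ is injective on $H^2$ with any coefficients; as $b_0$ and $b_1$ are determined by $\pi$, this gives $\chi(M)\ge\chi(N)$ immediately, so $q^{\mathsf{TOP}}(\pi)=\chi(N)$. If equality holds, $f$ is an \emph{isomorphism} on $H^2(-;\mathbb F)$ for every field $\mathbb F$ and hence preserves the intersection pairing; taking $\mathbb F=\mathbb F_2$ yields $w_2(M)=0$, and taking $\mathbb F=\mathbb R$ yields $\sigma(M)=8$. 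A smooth such $M$ would then be spin with signature $8$, contradicting Rokhlin. That is the entire proof.

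Several of the mechanisms you proposed are unnecessary or point the wrong way here. There is no Freedman surgery step and no need for $\pi$ to be good: $N$ is built by hyperbolization, not by surgery, and its fundamental group is certainly not known to lie in Freedman's class. Your isotropy bound $\chi(M)\ge 2-2b_1(\pi)+2b_2(\pi)$ requires the image of $H^2(\pi;\mathbb Q)$ to be isotropic, but for an aspherical $4$-manifold the cup-product form on $H^2(\pi)$ \emph{is} the nondegenerate intersection form of $N$, so that hypothesis fails; the relevant bound is the weaker $\chi(M)\ge 2-2b_1(\pi)+b_2(\pi)=\chi(N)$, which is sharp. Your avenue~(b)---force spin with prescribed nonzero signature---is the right instinct, but the mechanism that forces it is asphericity plus the $H^2$-isomorphism at the minimum, not module-theoretic rigidity, and Rokhlin alone suffices; Furuta's $10/8$ inequality is not needed.
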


\begin{proof}
One such group $\pi$ is the fundamental group of the manifold $N$ of \cite[Theorem 5a.1]{DJ}, which is a closed, oriented, aspherical, topological $4$-manifold which has $w_2(N) = 0$ and $\sigma(N) = 8$. Because $N$ is aspherical, $\chi(N) = q^{\mathsf{TOP}}(\pi)$ (see e.g. \cite[Theorem 3.8]{Kotschick} or the argument below). Rokhlin's theorem \cite{Rokhlin} implies that $N$ is not smoothable. 

Now suppose $M$ is any closed, oriented, topological $4$-manifold with $\pi_1(M) \cong \pi$ and $\chi(M) = \chi(N)$. Consider the natural map $f: M \to N = K(\pi, 1)$ induced by the given isomorphism. $f$ induces an isomorphism on $H^0$ and $H^1$ with any coefficients, and an injection on $H^2$ by Hopf's exact sequence. Let $\mathbb F$ be a field. Because \[\chi(M) = 2b_0(M;\mathbb F) - 2b_1(M;\mathbb F) + b_2(M;\mathbb F)\] and similarly for $\chi(N)$, it follows that $b_2(M;\mathbb F) = b_2(N; \mathbb F)$. Therefore $f$ induces an isomorphism on $H^2$ with $\mathbb F$ coefficients, and in particular preserves the intersection form over $\mathbb F$. Taking $\mathbb F = \mathbb F_2$ shows that $w_2(M) = 0$; taking $\mathbb F = \mathbb R$ shows that $\sigma(M) = 8$. Thus $M$ is also non-smoothable, and therefore $q^{\mathsf{TOP}}(\pi) < q^{\mathsf{DIFF}}(\pi)$.
\end{proof}

It was pointed out to the author by Jonathan Hillman that the map $f$ above is in fact a homotopy equivalence, and indeed this is the case for the natural map $f: M \to N$ whenever $N$ is a closed, oriented, aspherical $PD_4$-complex for which $\chi(M) = \chi(N)$ and $\pi_1(M) \cong \pi_1(N)$ \cite[Corollary 3.5.1]{Hillman0}. One should not be too optimistic: it is not, in general, the case that a $\chi$-minimizer for $\pi$ (in any category) is unique up to homotopy equivalence. Indeed, for $\pi = \mathbb Z/2$, there are two smooth $S^2$-bundles over $\mathbb{RP}^2$ with orientable total space. Because Euler characteristic is multiplicative and $2b_0(\pi) - 2b_1(\pi) + b_2(\pi) = 2$, these are both $\chi$-minimizers. They are not homotopy equivalent: one is spin, and the other is not \cite[Remark 4.5]{HK}.\\

Kotschick defines another invariant $p(\pi)$ by minimizing $\chi - \sigma$ over all closed, oriented $4$-manifolds with fundamental group $\pi$. One might also study the minimum $p_{\text{spin}}(\pi)$ of $\chi - \sigma$ over closed \textit{spin} manifolds. Taking the connected sum with $\mathbb{CP}^2$ or Freedman's $E_8$ manifold leaves $\chi - \sigma$ unchanged but invalidates the argument above, so it remains unclear and interesting whether or not $p^{\mathsf{DIFF}}(\pi) = p^{\mathsf{TOP}}(\pi)$; similarly with its spin variation. 

If the Davis--Januzkiewicz manifold were positive-definite, Donaldson's diagonalization theorem would imply that $p^{\mathsf{TOP}}(\pi) < p^{\mathsf{DIFF}}(\pi)$. However, the hyperbolization procedure greatly increases the rank of homology. It seems natural to ask whether there is an aspherical $E_8$ manifold. More generally:

\begin{question*}
Suppose $H_2$ is a unimodular symmetric bilinear form over $\mathbb Z$. Does there exist a closed, oriented, \textbf{aspherical} topological $4$-manifold with intersection form $H_2$? Can one further assume that the first homology is trivial?
\end{question*}

The case $H_1 = H_2 = 0$ was only resolved fairly recently \cite{RT}. In addition, few example are known with odd Euler characteristic \cite{Edmonds}. It is unlikely that the cohomology ring can be prescribed arbitrarily, as it seems plausible that a closed, oriented, aspherical $4$-manifold satisfies $\chi(M) \ge |\sigma(M)|$ \cite{Gromov,Lueck}; for more details see the introduction of \cite{ACS}.\\

\noindent\textbf{Acknowledgements.} I am grateful for the hospitality of the Institute of Mathematics at Academia Sinica, where this note was written. I am grateful to Ian Hambleton, Jonathan Hillman, Danny Ruberman, and Balarka Sen for helpful comments. In particular, the example with $\pi = \mathbb Z/2$ was suggested by Hambleton, and the observation that $f$ is a homotopy equivalence was made by Hillman.



\bibliography{biblio.bib}
\bibliographystyle{alpha}
\end{document}